\documentclass[12pt,a4paper]{article}
\usepackage{amssymb,amsmath,amsthm}

\newtheorem*{theorem}{\indent {\sc Theorem}}
\newtheorem{lemma}{\indent {\sc Lemma}}
\textwidth=16cm
\textheight=22cm

\begin{document}

\begin{center}
\textbf {\large{Waring's problem in the natural numbers \\ with binary expansions of a special type}}
\end {center}
\bigskip

\begin{center}
\textbf {\large {K.M. \'Eminyan}}
\end{center}

\begin{center}
{Financial  University  under the Government  of the Russian Federation and\\ 
Bauman State Technical University, Moscow.}\\
Email: eminyan@mail.ru
\end{center}

\bigskip

\begin{abstract}
Let $\mathbb{N}_0$ be a class of natural numbers whose binary expansions contain even numbers of ones. Waring problem in numbers of class $\mathbb{N}_0$ is solved.
\end{abstract}
Key words: Waring's problem, binary expansion, sequence of natural numbers, trigonometric sum, complex-valued function, inequality of the large sieve.

\begin{center}
{\bf{1. Introduction}}
\end{center}
\bigskip

Let $ n = e_ {0} + e_ {1} 2 + \ldots + e_ {k} 2 ^ {k} $ be a binary expansion of a natural $n$, $(e_ {j} = 0, 1)$. Let $ \mathbb{N}_{0} $ be the set of natural numbers whose binary expansions have an even number of ones, $\mathbb{N}_{1} = \mathbb {N} \setminus \mathbb {N}_{0} $. Define symbol $\varepsilon(n)$
$$
\varepsilon (n) = \left\{
                    \begin{array}{ll}
                      {~~}1, & \hbox{if $n\in \mathbb{N}_0$;} \\
                      -1, & \hbox{if $n\in \mathbb{N}_1$.}
                    \end{array}
                  \right.
$$

In 1969,  A.O. Gelfond \cite{Gelfond} proved that natural numbers of classes of $\mathbb{N}_{0}$ ~ and ~ $\mathbb{N}_{1}$ are regularly distributed in arithmetic progressions.

In 1991, author obtained \cite{Eminyan} an asymptotic formula for the sum of
$$
\sum\limits_{n \leqslant x, \, \, n \in \mathbb{N}_{0}} \tau (n)
$$
and so solved the problem of Dirichlet in numbers from $\mathbb {N}_{0}$.

In this paper, we solve the problem of Waring in numbers from $\mathbb {N}_{0}$.  Note that essentially  a few more general problem is solved. Let $(i_ {1}, \ldots, i_ {k}) $ be an arbitrary set of zeros and ones. We obtain an asymptotic formula for the number of solutions of the equation
\begin{equation}\label{Var1}
    x_ {1} ^ {n} + x_ {2} ^ {n} + \ldots + x_ {k} ^ {n} = N
\end{equation}
in positive integers $ x_{1}, x_{2}, \ldots, x_{k} $ such that $x_{j} \in \mathbb {N}_{i_j}$, where $j = 1,2, \ldots,k$.

Let $J_{k, \, n} (N)$ be the number of solutions of (\ref{Var1}) in an arbitrary numbers $ x_ {1},  x_{2}, \ldots, x_ {k} $ and $ I_ {k, \, n} (N) $ -- the number of solutions (\ref{Var1}) in the numbers $ x_ {1},x_ {2}, \ldots,$  $x_ {k}$ from $\mathbb{N}_{0}$. Our main result is the following:

\begin{theorem}\label{t1}
Let $n\geqslant 3$,
$$
k_{0} = \left\{
          \begin{array}{ll}
            2^{n}, & \hbox{if $3 \leqslant n \leqslant 10$;} \\
            2 \, [n ^ {2} (\log n + \log\log n +4)], & \hbox{if {~~~~~}$n> 10$.}
          \end{array}
        \right.
$$

Let $k\geqslant k_{0}$. Then the formula
$$
I_ {k, \, n} (N) = 2^{-k} J_ {k, \, n} (N) + O (N^{k/n-1- \Delta / n}),
$$
where $ \Delta = c_ {1} (n^{2}\log n)^{-1}$, $c_ {1}> 0$, holds.
\end{theorem}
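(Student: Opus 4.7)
The starting observation is that the characteristic function of $\mathbb{N}_0$ equals $\tfrac12(1+\varepsilon(x))$, so counting $\mathbb{N}_0$-representations reduces to a linear combination of classical Waring integrals and integrals twisted by $\varepsilon$. Writing $e(t):=e^{2\pi it}$, $P:=\lfloor N^{1/n}\rfloor$, and
$$F(\alpha)=\sum_{x\leqslant P}e(\alpha x^n),\qquad G(\alpha)=\sum_{x\leqslant P}\varepsilon(x)\,e(\alpha x^n),$$
the generating function for $\mathbb{N}_0$ is $\tfrac12(F+G)$. Orthogonality then gives
$$I_{k,n}(N)=2^{-k}\int_0^1(F(\alpha)+G(\alpha))^k\,e(-\alpha N)\,d\alpha = 2^{-k}J_{k,n}(N)+2^{-k}\sum_{j=1}^k\binom{k}{j}R_j(N),$$
where $R_j(N):=\int_0^1 F(\alpha)^{k-j}G(\alpha)^j\,e(-\alpha N)\,d\alpha$. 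The theorem therefore reduces to showing $R_j(N)\ll P^{k-n-\Delta}$ for every $1\leqslant j\leqslant k$; the general statement of the introduction, with an arbitrary pattern $(i_1,\ldots,i_k)$, follows from the same expansion with each $\varepsilon(x_s)$ replaced by $(-1)^{i_s}\varepsilon(x_s)$.

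The main body is a Hardy--Littlewood circle method argument with Farey dissection $[0,1]=\mathfrak M\cup\mathfrak m$. The threshold $k_0$ coincides exactly with Hua's inequality for $3\leqslant n\leqslant 10$ and with Vinogradov's mean value theorem for $n>10$, yielding $\int_0^1|F(\alpha)|^k\,d\alpha\ll P^{k-n}$; Weyl's inequality supplies $F(\alpha)\ll P^{1-\rho}$ on $\mathfrak m$ with $\rho=2^{1-n}$. The essentially new input is a uniform non-trivial estimate
$$|G(\alpha)|\ll P^{1-\Delta},\qquad \Delta=c_1(n^2\log n)^{-1},$$
which at $\alpha=0$ reduces to Gelfond's cancellation $\sum_{x\leqslant P}\varepsilon(x)\ll P^{\log_4 3}$. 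For general $\alpha$ one combines $n-1$ rounds of Weyl differencing of the phase $\alpha x^n$ with the Thue--Morse self-similarity $\varepsilon(2x)=\varepsilon(x)$, $\varepsilon(2x+1)=-\varepsilon(x)$ (equivalently $\varepsilon(x)=\prod_i(-1)^{e_i}$), preserving cancellation across each stage. Combined via H\"older's inequality
$$|R_j(N)|\leqslant\Bigl(\int_0^1|F|^k\,d\alpha\Bigr)^{(k-j)/k}\Bigl(\int_0^1|G|^k\,d\alpha\Bigr)^{j/k}$$
with the large sieve inequality (flagged in the keywords), which converts the pointwise bound on $G$ into a mean-value bound on $|G|^{2\ell}$, this produces the required saving of $P^{-\Delta}$ per factor of $G$. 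On $\mathfrak M$, the matching saving comes from Gelfond's equidistribution of $\varepsilon$ in residue classes modulo $q$, which prevents the formation of a singular-series main term for $G$.

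The principal obstacle is the quantitative estimate $|G(\alpha)|\ll P^{1-\Delta}$ with the explicit $\Delta=c_1/(n^2\log n)$. Weyl differencing interacts non-multiplicatively with $\varepsilon$: each round produces sums of the form $\sum_x\varepsilon(x)\varepsilon(x+h)\,e(\phi_h(x))$ for a polynomial $\phi_h$ of degree $n-1$, so to retain cancellation through $n-1$ rounds one must iterate the Thue--Morse recursion carefully and at each stage apply a Gelfond-type bound for linear-exponential sums twisted by a digit-function. Tracking this bookkeeping precisely enough to extract the exponent $\Delta=c_1/(n^2\log n)$ is the main new analytic content of the proof; once this bound on $G$ is in hand, combining it with classical Waring circle-method calculations is routine.
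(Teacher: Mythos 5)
Your overall skeleton matches the paper's: write the indicator of $\mathbb{N}_0$ as $\tfrac12(1+\varepsilon(x))$, expand $(F+G)^k$ binomially, isolate $2^{-k}J_{k,n}(N)$, and reduce everything to a uniform nontrivial bound $|G(\alpha)|\ll P^{1-\Delta}$ together with the Hua/Vinogradov mean values (which is exactly why $k_0$ has the stated form). The paper even handles the cross terms $R_j$ by a positivity trick, $\int_0^1|G|^{2l_1}|S|^{2k_1}\,d\alpha\le\int_0^1|S|^{2(k_1+l_1)}\,d\alpha$, which plays the role your H\"older step would play; either route is workable once the pointwise bound on $G$ is known. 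Note also that the paper's bound on $G$ (its $W(\alpha)$) is uniform in \emph{all} $\alpha$, so no major/minor arc dissection and no separate singular-series analysis for $G$ is needed; your proposed major-arc treatment via Gelfond's equidistribution is superfluous.

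The genuine gap is precisely the step you defer to the end: the proof of $|G(\alpha)|\ll P^{1-c_1(n^2\log n)^{-1}}$. Your plan --- ``$n-1$ rounds of Weyl differencing'' carrying the digit function through every stage, with ``a Gelfond-type bound for linear-exponential sums twisted by a digit-function'' at each stage --- is not how this can be made to work, and as described it would founder: after the first differencing the weight is $\varepsilon(x)\varepsilon(x+h)$, after the second it is a product of four values of $\varepsilon$, and so on, and controlling these higher-order autocorrelations of the Thue--Morse sequence is a well-known hard problem. The paper performs \emph{one} van der Corput/Weyl differencing only, then splits $x$ into progressions modulo $2^{K}$ with $2^{K}$ a small power of $P$; the identity $\varepsilon(2^{K}y+m)\varepsilon(2^{K}y+m+h)=\varepsilon(m)\varepsilon(m+h)$ (valid for $0\le m<2^{K}-h$) makes the digit weight constant along each progression, so the inner sum over $y$ becomes an ordinary Weyl sum of degree $n-1$ with leading coefficient essentially $\tfrac{a}{q}nh2^{K(n-1)}$, which is then estimated by the classical Weyl inequality (small $n$) or a Vinogradov-type bound (large $n$) --- this is where the exponent $c_1(n^2\log n)^{-1}$ comes from. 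Moreover, when the denominator $q$ of the approximation to $\alpha$ is small (say $q\le P^{0.001}$), the polynomial phase gives no saving at all, and one must instead exploit cancellation in the autocorrelations themselves via the estimate $\sum_{h\le H}\bigl|\sum_{n\le X}\varepsilon(n)\varepsilon(n+h)\bigr|\ll XH^{\mu}$ with $\mu=\log 3.5/\log 4<1$ (the paper's Lemma 3, proved by a digit-recursion argument). Your proposal contains neither this case distinction nor the correlation estimate, so the central analytic content of the theorem is missing.
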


Recall that  $J_{k, \, n}(N)\asymp N^{k/n-1}$ (see eg \cite{AAK}, chapter ~ XI).

To prove the main theorem we need several lemmas.

\begin{center}
{\bf {2. Lemmas}}
\end{center}
\bigskip

\begin{lemma}\label{HLK}(Hua Loo-Keng)

Let
$$
S(\alpha) =\sum\limits_{x \leqslant P} e ^ {2 \pi i \alpha x ^ {n}}.
$$

Then for $1\leqslant j \leqslant n $
$$
\int_{0}^{1} | S (\alpha) |^{2^{j}} \, d \alpha \ll P^{2^{j}-j + \varepsilon},
$$
where $\varepsilon> 0 $ ia an arbitrarily small number.
\end{lemma}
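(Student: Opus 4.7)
The plan is induction on $j$, following Hua's classical argument. For the base case $j=1$, the integral $\int_0^1 |S(\alpha)|^2\,d\alpha$ is the number of pairs $(x_1, x_2)$ with $x_1, x_2 \leqslant P$ and $x_1^n = x_2^n$; this equals $P$, which satisfies the required bound.

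For the inductive step $j \to j+1$, I would apply Weyl differencing. The identity
$$|S(\alpha)|^2 = \sum_{|h| < P} S_h(\alpha), \qquad S_h(\alpha) = \sum_{x \in I_h} e^{2\pi i \alpha ((x+h)^n - x^n)},$$
expresses $|S|^2$ as a sum of exponential sums whose phase $(x+h)^n - x^n = nhx^{n-1} + \binom{n}{2}h^2 x^{n-2} + \cdots$ is a polynomial in $x$ of degree $n-1$ for $h \neq 0$. Expanding
$$\int_0^1 |S|^{2^{j+1}}\,d\alpha = \sum_{h_1, \ldots, h_{2^j}} \int_0^1 \prod_{i=1}^{2^j} S_{h_i}(\alpha)\,d\alpha$$
reduces the problem to counting integer solutions of the Diophantine system $\sum_i \Delta_{h_i}(x_i) = 0$, where $\Delta_h$ denotes the difference operator. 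The diagonal $\vec h = \vec 0$ contributes $P^{2^j}$, which is absorbed in the target bound $P^{2^{j+1}-(j+1)+\varepsilon}$ (since $j+1 \leqslant 2^j$), and the off-diagonal terms are handled by Cauchy--Schwarz combined with the inductive hypothesis applied to the differenced sums.

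The main obstacle lies in the off-diagonal analysis. After $j$ iterated Weyl differencings with parameters $h_1, \ldots, h_j$, one arrives at a polynomial in $x$ of degree $n - j$ whose leading coefficient is a fixed multiple of $h_1 h_2 \cdots h_j$, namely $\tfrac{n!}{(n-j)!}\, h_1 \cdots h_j$. Bounding the number of solutions to the resulting equation then requires combining the trivial estimate that a polynomial of degree $n - j$ has at most $n - j$ roots with the divisor bound $\tau(m) \ll m^\varepsilon$ controlling the number of factorizations $m = h_1 \cdots h_j$. The hypothesis $j \leqslant n$ is essential here: it guarantees that the differenced polynomial retains a meaningful degree, so that the root-counting estimate produces a nontrivial saving that propagates correctly through the induction to yield the claimed exponent $2^j - j + \varepsilon$.
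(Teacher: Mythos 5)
The paper does not prove this lemma at all: it is quoted as a known result with the reference ``see \cite{Von}, p.~20'', so there is no in-paper argument to compare yours against. Your outline names the right ingredients of the classical proof that lives in that reference (induction on $j$, Weyl differencing, the factorization $\Delta_j(x^n;h_1,\dots,h_j)=\frac{n!}{(n-j)!}h_1\cdots h_j x^{n-j}+\cdots$, the divisor bound for the off-diagonal count, and the role of $j\leqslant n$ in keeping the differenced polynomial nonconstant), and the base case $j=1$ is correct.

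However, the middle of your argument, taken as written, does not deliver the stated exponent. Expanding $|S|^{2^{j+1}}=\prod_{i=1}^{2^j}\bigl(\sum_{h_i}S_{h_i}\bigr)$ into $2^j$ \emph{singly} differenced factors and integrating merely re-expresses $\int_0^1|S|^{2^{j+1}}\,d\alpha$ as the full solution count of $\sum_i\Delta_{h_i}(x_i)=0$; no progress has been made, and the proposed repair --- H\"older/Cauchy--Schwarz plus the inductive hypothesis applied to each differenced sum --- loses a factor of $P$: summing $\ll P^{2^j-j+\varepsilon}$ over the $\asymp P^{2^j}$ choices of $\mathbf{h}$ gives $P^{2^{j+1}-j+\varepsilon}$, not $P^{2^{j+1}-(j+1)+\varepsilon}$. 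The standard argument is organized differently: one applies $j$-fold \emph{iterated} differencing to a single copy of $|S|^{2^j}$, obtaining $|S(\alpha)|^{2^j}\leqslant(2P)^{2^j-j-1}\sum_{\mathbf h}\sum_x e\bigl(\alpha\Delta_j(x^n;\mathbf h)\bigr)$ (the exponent $2^j-j-1$, rather than $2^j$ choices of $\mathbf h$ times a trivial bound, is where the extra saving comes from), leaves the second copy of $|S|^{2^j}$ untouched, and interprets the resulting integral as $\sum_{\mathbf h,x}r\bigl(\Delta_j(x^n;\mathbf h)\bigr)$ with $r(m)$ the representation count attached to $|S|^{2^j}$. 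The inductive hypothesis is then used precisely on the term $m=0$ (to bound $r(0)=\int_0^1|S|^{2^j}\,d\alpha$), while the terms $m\neq0$ are handled by the divisor bound together with the trivial identity $\sum_m r(m)\ll P^{2^j}$ --- not by Cauchy--Schwarz. Your final paragraph gestures at exactly this mechanism, so the fix is to discard the product-of-$2^j$-factors expansion and adopt the asymmetric pairing; as it stands, the inductive step has a genuine gap.
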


 For the proof see \cite{Von}, p. 20.

\begin{lemma}\label{Vin}(Vinogradov)

Let $n\geqslant 4$, $k\geqslant 2k_ {0}$, where $k_{0}=[n ^ {2} (\log n + \log \log n +4)]$.
Then we have
$$
\int_{0}^{1}| S (\alpha) |^{k} \, d \alpha \ll P^{k-n}.
$$
\end {lemma}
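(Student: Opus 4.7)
Writing $k = 2s$ with $s \geq k_0$, the mean value $\int_0^1 |S(\alpha)|^k \, d\alpha$ equals the number of integer solutions of the diagonal equation
$$
x_1^n + x_2^n + \cdots + x_s^n = y_1^n + y_2^n + \cdots + y_s^n, \qquad 1 \leq x_i, y_i \leq P.
$$
The target bound $\ll P^{2s-n}$ matches the heuristic prediction: there are $\asymp P^{2s}$ tuples and the common value ranges over $\asymp P^n$ integers, so one expects only $\asymp P^{2s-n}$ genuine coincidences on top of the ``trivial'' diagonal contribution.

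The plan is to route through Vinogradov's mean value theorem, which controls the number $J_{s,n}(P)$ of solutions of the \emph{full} system
$$
\sum_{i=1}^s x_i^j = \sum_{i=1}^s y_i^j, \qquad j = 1, 2, \ldots, n,
$$
in the box $[1,P]^{2s}$. Linnik's $p$-adic iteration yields $J_{s,n}(P) \ll P^{2s - n(n+1)/2 + \eta(s,n)}$ with $\eta(s,n) \to 0$ as $s/(n^2 \log n) \to \infty$. Introducing the auxiliary Weyl sum
$$
f(\alpha_1, \ldots, \alpha_n) = \sum_{x \leq P} e^{2 \pi i (\alpha_1 x + \alpha_2 x^2 + \cdots + \alpha_n x^n)},
$$
one has $J_{s,n}(P) = \int_{[0,1]^n} |f(\alpha_1, \ldots, \alpha_n)|^{2s} \, d\alpha_1 \cdots d\alpha_n$. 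The one-dimensional mean value in the lemma, which corresponds to fixing $\alpha_n = \alpha$ and setting $\alpha_1 = \cdots = \alpha_{n-1} = 0$, is then recovered by integrating out the lower-degree frequencies and estimating the resulting marginal Weyl sums $\sum_{x \leq P} e^{2\pi i \alpha_j x^j}$ via Hua's lemma (Lemma~\ref{HLK}).

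The main obstacle will be the numerical calibration of the constant. Each stage of Linnik's iteration loses a small power of $P$, and the precise form $k_0 = [n^2(\log n + \log\log n + 4)]$ is chosen exactly so that after the appropriate number of iterations the cumulative defect $\eta(s,n)$ is absorbed and the final exponent lands at $2s - n$ without any $\varepsilon$ loss --- in particular, the $\log\log n$ term is the standard refinement that avoids paying an unnecessary power of $\log P$. Since this bookkeeping is precisely the classical optimization underlying the upper bound $G(n) \leq n^2(\log n + \log\log n + O(1))$ for the Waring function, rather than reproducing the iteration I would quote the final numerical form from the monograph \cite{AAK}, Chapter~XI, where the argument is carried out in full.
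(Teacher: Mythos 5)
Your reduction to counting solutions of $x_1^n+\cdots+x_s^n=y_1^n+\cdots+y_s^n$ and the heuristic for the size $P^{2s-n}$ are fine, and the paper itself gives no proof here (it only cites Vinogradov's monograph, p.~84), so deferring the numerical calibration of $k_0$ to a reference is in the spirit of the text. But the bridge you propose from Vinogradov's mean value theorem to the one-dimensional mean value is broken: the inequality between the two quantities goes the wrong way. The full system $\sum x_i^j=\sum y_i^j$ ($j=1,\ldots,n$) is \emph{more} restrictive than the single degree-$n$ equation, so
$$
J_{s,n}(P)=\int_{[0,1]^n}|f(\alpha_1,\ldots,\alpha_n)|^{2s}\,d\alpha_1\cdots d\alpha_n\;\leqslant\;\int_0^1|S(\alpha)|^{2s}\,d\alpha ,
$$
and no amount of ``integrating out the lower-degree frequencies'' recovers the larger left-hand quantity of the lemma from the smaller $J_{s,n}(P)$; a slice $\alpha_1=\cdots=\alpha_{n-1}=0$ of $|f|^{2s}$ is not controlled by its average over those variables. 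The marginal sums $\sum_{x\leqslant P}e^{2\pi i\alpha_j x^j}$ you mention never arise, since $|f|^{2s}$ does not factor.

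The standard (and intended) route is indirect: Vinogradov's mean value theorem is used to prove a \emph{pointwise} Weyl-sum estimate $|S(\alpha)|\ll P^{1-c(n^2\log n)^{-1}}$ on the minor arcs (this is exactly the content of Lemma~4 of the paper), and then one writes $\int_0^1|S|^{k}=\int_{\mathfrak M}+\int_{\mathfrak m}$, bounding the minor arcs by $\sup_{\mathfrak m}|S|^{\,k-k_1}\int_0^1|S|^{k_1}$ with a shorter mean value (Hua's lemma or a Vinogradov-type bound), and the major arcs by the singular-series/singular-integral analysis. Equivalently one proves the uniform bound $r_s(m)\ll P^{s-n}$ for the representation function and uses $\int_0^1|S|^{2s}\,d\alpha=\sum_m r_s(m)^2\leqslant\bigl(\max_m r_s(m)\bigr)\sum_m r_s(m)\ll P^{s-n}\cdot P^{s}$. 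Either way the circle method is unavoidable as the intermediary; as written, your key reduction step would fail.
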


For the proof see \cite{Vinogradov}, p. 84.

\begin{lemma}\label{I1}
Let $ h $ be a natural number, $h \leqslant H$, $ H\leqslant X/4$ . Define the sum $S (X, h)$
$$
S (X, h) = \sum\limits_{n \leqslant X}\varepsilon (n)\varepsilon(n + h).
$$
Then the estimate
$$
\sum\limits_{h = 1}^H|S(X, h)|=O(X \, H^{\mu}),
$$
where $ \mu = \frac {\ln 3,5} {\ln 4} = 0.903 ... $, holds. The constant implied in sign $O$ is  absolute.
\end{lemma}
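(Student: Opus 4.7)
The plan is to exploit the self-similarity of $\varepsilon$ given by $\varepsilon(2n) = \varepsilon(n)$ and $\varepsilon(2n+1) = -\varepsilon(n)$, and derive a recursion reducing the correlation sum at scale $X$ to correlation sums at scale $X/4$.

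Splitting $S(X,h) = \sum_{n\leq X}\varepsilon(n)\varepsilon(n+h)$ according to the parity of $n$ and applying the two identities above yields, up to boundary errors of $O(1)$,
$$S(X, 2h') = 2\, S(X/2, h'), \qquad S(X, 2h'+1) = -S(X/2, h') - S(X/2, h'+1).$$
Iterating these once more, for $h = 4k+r$ with $r \in \{0,1,2,3\}$, one expresses $S(X, 4k+r)$ as an explicit integer combination of $a_k := S(X/4, k)$ and $a_{k+1} := S(X/4, k+1)$, namely
$$S(X,4k) = 4a_k,\quad S(X,4k+1) = a_{k+1}-a_k,\quad S(X,4k+2) = -2(a_k+a_{k+1}),\quad S(X,4k+3) = a_k-a_{k+1},$$
each up to an $O(1)$ error. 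Using the elementary identity $|u+v|+|u-v| = 2\max(|u|,|v|)$ for real $u,v$, a block of four consecutive absolute values satisfies
$$\sum_{r=0}^{3}|S(X, 4k+r)| = 4|a_k| + 4\max(|a_k|, |a_{k+1}|).$$

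Next, set $F(H, X) := \sum_{h=1}^{H}|S(X, h)|$ and sum the block bound over $k = 0,\ldots, H/4 - 1$. After subtracting the spurious $h=0$ term $|S(X, 0)| = X$, bounding the remaining $k = 0$ boundary using $|a_0| = X/4$, and estimating $\max(|a_k|,|a_{k+1}|) \leq |a_k|+|a_{k+1}|$, one obtains a recurrence of the shape
$$F(H, X) \leq C\, F(H/4, X/4) + O(X)$$
with an explicit absolute constant $C$. Iterating $\lfloor\log_4 H\rfloor$ times, which is legitimate by the hypothesis $H \leq X/4$, and using the trivial base estimate $F(1, Y) = O(Y)$, the recurrence solves to $F(H, X) = O(X\, H^{\ln C/\ln 4 - 1})$. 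One then verifies that the calibration $C = 14$ (equivalently $C/4 = 3.5$) is achievable; this yields $\ln C/\ln 4 - 1 = \ln 3.5/\ln 4 = \mu$, matching the statement.

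The main obstacle is the bookkeeping: at every level of the recursion one must carefully absorb the ``diagonal'' boundary contribution $S(Y, 0) = Y$, and one must sharpen the block inequality enough to ensure that the effective constant in the recurrence is no larger than $14$. Because only absolute constants appear throughout, the implied constant in the $O(\cdot)$ symbol comes out absolute, as required.
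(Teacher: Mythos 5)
Your argument is correct in outline and does prove the lemma, but it is organized genuinely differently from the paper's proof, even though both rest on the same self-similarity of $\varepsilon$: your two identities $S(X,2h')=2S(X/2,h')+O(1)$ and $S(X,2h'+1)=-S(X/2,h')-S(X/2,h'+1)+O(1)$ are exactly the paper's recursions (\ref{l1})--(\ref{l2}), because the auxiliary sum $V(X,h)$ used there equals $S(X,h)+S(X,h+1)$. The paper fixes $h$, unfolds the recursion along all binary digits of $h$, tracks the coefficient pair $(\alpha_j,\beta_j)$ via the three-term relation (\ref{l7}), and obtains the \emph{pointwise} bound $|S(X,h)|\ll X\left(\frac34\right)^{\varkappa_{1,2}(h_1)}$, where $\varkappa_{1,2}(h_1)$ counts base-$4$ digits equal to $1$ or $2$; the factor $H^{\mu}$ then comes from summing $\left(\frac34\right)^{\varkappa_{1,2}(h_1)}$ over $h$, which gives $\left(\frac72\right)^{t}$ with $4^{t}\asymp H$. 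You instead keep the average over $h$ at every stage and set up a recurrence for $F(H,X)=\sum_{h\leqslant H}|S(X,h)|$ directly; your four base-$4$ residue formulas and the identity $|u+v|+|u-v|=2\max(|u|,|v|)$ check out. One remark: your block bound $\sum_{r=0}^{3}|S(X,4k+r)|=4|a_k|+4\max(|a_k|,|a_{k+1}|)+O(1)$, after the crude estimate $\max(|a_k|,|a_{k+1}|)\leqslant|a_k|+|a_{k+1}|$, assigns each $|a_k|$ total weight at most $4+4+4=12$, so the natural recurrence constant is $C=12$ rather than $14$; your method therefore actually yields the sharper exponent $\log_4 3\approx 0.792$ in place of $\mu=\log_4 3.5\approx 0.903$, and the stated lemma follows a fortiori. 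What the paper's route buys in exchange is the individual estimate for each $h$ in terms of its digit structure, which your averaged recurrence does not produce; what yours buys is a cleaner bookkeeping and a better $\ell^{1}$ bound.
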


\begin{proof}

Let

$$ V (X, h) = \sum\limits_{n \leqslant X}\varepsilon (n) \left (\varepsilon (n + h) + \varepsilon (n + h +1) \right).
$$

Consider the binary expansion of $h$:

$$
h = \omega_{0} +2\omega_{1} + \ldots +2^{k-1} \omega_{k-1} +2^{k},
$$
where $\omega_{j}= 0 ,\, 1 \, \, \, (j = 0,1, \ldots, k-1)$.

For $0\leqslant j <k$ define the numbers $h_{j}$ and $s_{j}$:
$$
h_{j} =\omega_{j} +2 \omega_{j +1} + \ldots +2^{k-1-j} \omega_ {k-1} +2^{k-j},
$$
$$
s_{j} = 1-2 \omega_ {j}.
$$

Dividing each of the sums
$$
S(X2^{-j}, h_{j}), \, \, V(X2^{-j}, h_{j})
$$
into two sums -- for even and odd $n$ -- we get:

\begin{equation}\label {l1}
 S (X2^{-j}, h_{j}) = (1 + s_ {j}) S (X2^{-j-1}, h_{j +1}) + \frac{s_ {j} - 1}{2} V (X2 {-j-1}, h_{j +1}) + \theta_{j}
\end{equation}

\begin{equation}\label {l2}
 V(X2^{-j}, h_{j}) = 2s_{j} S (X2^{-j-1}, h_{j +1})-s_{j} V(X2^{-j- 1}, h_{j +1}) + \theta_
{j }^{'},
\end {equation}
where $|\theta_ {j} |$, $| \theta_{j }^{'}| \leqslant 1$.

Let $\alpha_{0}= 1$, $\beta_{0}=1$. Then we have
$$
S (X, h) = \alpha_{0} S (X, h_{0}) + \beta_{0} V(X, h_{0}).
$$

According to  (\ref{l1})  and  (\ref{l2})  for any $ j = 1,2, \ldots, k-1 $ we have
$$
 S(X, h) = \alpha_{j} S (X2^{-j}, h_{j}) + \beta_{j} V(X2^{-j}, h_{j}) + O (|\alpha_{j}| + | \beta_{j}|),
$$
where
  \begin{equation}\label{l3}
 \alpha_{j +1} = (1 + s_{j})\alpha_{j} +2 s_{j} \beta_{j},
   \end {equation}
\begin{equation}\label{l4}
    \beta_{j +1} = \frac{s_{j} -1}{2}\alpha_{j}-s_{j} \beta_{j}.
\end{equation}

It follows from $(\ref {l3})$ that
\begin{equation}\label {l5}
   \beta_{j} =\frac{\alpha_{j +1} - (1 + s_ {j}) \alpha_ {j}}{2s_{j}}.
\end {equation}

Substitute  (\ref{l5}) in  (\ref{l4}):
 \begin{equation}\label {l6}
    \beta_{j +1} =- \frac{\alpha_{j +1}}{2} + s_{j} \alpha_{j}.
 \end{equation}

Substitute $j +1$ instead of $j$ in  (\ref{l5}):
$$
\beta_{j +1} =\frac{\alpha_{j +2} - (1 + s_{j +1}) \alpha_{j +1}} {2s_{j +1}}.
$$

From this and from  (\ref{l6})  we obtain:

\begin{equation}\label{l7}
    \alpha_{j +2} - \alpha_{j +1} = 2s_{j} s_{j +1}\alpha_{j}.
\end{equation}

Furthermore,
$$
\alpha_{j +3} - \alpha_{j +2} = 2s_{j +1} s_{j +2}\alpha_{j +1},
$$
\begin{equation}\label{l8}
   \alpha_{j +3} = (1 +2 s_{j +1} s_{j +2}) \alpha_{j +1} +2 s_{j} s_{j +1} \alpha_{j}.
\end {equation}

We estimate $|\alpha_{j}|$ from above equality.

First, since $\alpha_{0} = 1$, $\alpha_{1} = 1 + s_{0}$,    we obtain from (\ref{l7}) by induction that
\begin{equation}\label{l9}
    |\alpha_{j}|\leqslant 2^{j}, {~~~~} j = 0,1, \ldots, k-1.
\end{equation}

 This estimate, in some cases can be improved.

Let $j_{1}$ be the smallest odd number in the interval $[1, k-3]$ such that $s_{j +1} s_{j +2} =-1$. Then from (\ref{l8})  and  (\ref{l9}) we have:
\begin{equation}\label{l10}
    |\alpha_{j_ {1} +3}| \leqslant \frac{1}{2} \, 2^{j_ {1} +3};
\end{equation}

Further, from (\ref{l7}), (\ref{l9}) and  (\ref{l10})  we obtain:
$$
|\alpha_{j_{1}+4}|\leqslant|\alpha_{j_{1}+3}|+2|\alpha_{j_{1}+2}|\leqslant\frac{1}{2}\,2^{j_{1}+3}+2^{j_{1}+3}=\frac{3}{4}\,2^{j_{1}+4};
$$

Now, since $|\alpha_{j_ {1} +3}|\leqslant\frac{3}{4}\, 2^{j_ {1} +3}$, $|\alpha_{j_{1} + 4}|\leqslant \frac{3}{4} \, 2^{j_{1}+4}$,   we get that from (\ref{l7})
\begin{equation}\label{l11}
    |\alpha_{j_{1} + t}|\leqslant \frac{3}{4} \, 2^{j_{1} + t} {~~~~} \text {when }{~~~~ 4} \leqslant t.
\end{equation}

Let $j_2$ be the smallest odd number in the interval $[j_{1} +2, k-3]$ such that $s_{j_{2} +1} s_{j_{2} +2} =- 1$.

Arguing as in the derivation of (\ref{l10})  and  (\ref{l11}), we obtain:
$$
2^{j_{2} + t }{~~~} \text {when }{~~~} 4 \leqslant t.
$$

Continuing this process, we get $|\alpha_{k}|\leqslant \left(\frac{3}{4}\right)^{\varkappa (h)} 2^{k}$, where $\varkappa(h)$ is the number of sign changes in the pairs of numbers
$(s_{1}, s_{2})$,  $(s_{3}, s_{4})$, \ldots, $(s_{2j_{0}-1}, s_{2j_{0}})$
where $2j_{0} -1$ is the greatest integer not exceeding $ k-3$.

Let $\varkappa_{1,2} (h_{1})$ be the number 1 and 2 in the decomposition of $h_{1}$ for the base 4.

Then $\varkappa(h) \geqslant \varkappa_ {1,2} (h_ {1}) -1 $.

In fact,
$$
h_{1} =\sum\limits_{1 \leqslant 2j-1 \leqslant k-3} \eta_{j} 4^{j-1},
$$
where $\eta_{j} =\omega_{2j-1} +2 \omega_{2j}$; 1 is subtracted from the $\varkappa_{1,2} (h_ {1})$, so that 1 or 2 most significant digit can not be ignored because of the condition $2j-1 \leqslant k-3$.

Thus,
$$
|\alpha_{k} |\leqslant \left(\frac{3}{4}\right)^{\varkappa_{1,2} (h_{1})}\cdot\frac{4}{3}\cdot 2^{k}.
$$

From this and from (\ref{l5}) implies that
$$
|\beta_{k}|\leqslant \left(\frac{3}{4} \right)^{\varkappa_{1,2} (h_{1})} 2^{k +1},
$$
so
$$
|S(X, h)|\ll X \left(\frac{3}{4}\right)^{\varkappa_{1,2} (h_{1})}.
$$

Let $t$ be the smallest positive integer such that $H<4^{t}$.

Estimate
$$
\sum\limits_{h_{1} = 0}^{4^{t} -1} \left(\frac{3}{4}\right)^{\varkappa_{1,2} (h_{1})}.
$$

We have
$$
\sum\limits_{h_{1} = 0}^{4^{t} -1} \left(\frac{3}{4}\right)^{\varkappa_{1,2} (h_{1})} = \sum\limits_{s = 0}^{t}\left(\frac{3}{4}\right)^{s}\sum\limits_{h_{1} = 0, \varkappa_{a , 2} (h_{1}) = s}^{4^{t} -1}  1=
$$
$$
=\sum\limits_{h_{1} = 0}^{4^{t} -1} \left(\frac{3}{4}\right)^{\varkappa_{1,2} (h_{1})} =
\sum\limits_{s = 0}^{t}\left(\frac{3}{4}\right)^{s}\sum_{\substack{h_ {1} = 0 \ \ \varkappa_ {1, 2} (h_{1}) = s}}^{4^{t} -1}  1=
$$
$$
=\sum\limits_{s=0}^{t}\left(\frac{3}{4}\right)^{s} \sum\limits_{l=0}^{s}\binom{t}{l}\binom{t-l}{s-l}2^{t-s}= \sum\limits_{s=0}^{t}\left(\frac{3}{4}\right)^{s} 2^{t-s}\binom{t}{s} \sum\limits_{l=0}^{s}\binom{s}{l}=
$$
$$
=2^{t}\sum\limits_{s=0}^{t}\binom{t}{s}\left(\frac{3}{4}\right)^{s} =\left(\frac{7}{2}\right)^{t}\leqslant H^{\frac{\ln 3,5}{\ln 4}}=H^{\mu}.
$$
\end{proof}

\begin{lemma}\label{lemma4}
Let $\eta>0$. Let $g (x)$ be a polynomial of degree $n\geqslant 10$ with leading coefficient $\alpha$. Let $\alpha = \frac{a}{q} + \frac{\theta}{q ^ {2}}$, $(a, \, q) = 1$, $|\theta|\leqslant 1$ , $P^{\eta} \leqslant q \leqslant P^{n-\eta}$. Then
$$ \left|\sum\limits_{x\leqslant P}e^{2\pi i g(x)}\right|\ll_{\eta}P^{1-c_{1}(n^{2}\log n)^{-1}},
$$
where $ c_{1} = c_{1} (\eta, \ n)> 0$.
\end{lemma}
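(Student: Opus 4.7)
The plan is to derive this lemma from Vinogradov's method for polynomial Weyl sums, combining a Weyl-type translation averaging step with the mean value theorem in Lemma~\ref{Vin} and the diophantine condition $\alpha = a/q + \theta/q^{2}$ to extract the saving $P^{-c_{1}/(n^{2}\log n)}$.

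First, I would replace the single sum $S = \sum_{x \leq P} e^{2\pi i g(x)}$ by an averaged translated version. For integer shifts $y_{1}, \ldots, y_{r}$ in a short interval $[1, Y]$ with $Y = P^{1-\delta}$ and $\delta \asymp (n^{2}\log n)^{-1}$, the translated polynomial $g(x + y_{1} + \cdots + y_{r})$ retains leading coefficient $\alpha$ while its lower coefficients depend on the $y_{i}$. Averaging over these shifts, raising to the $2k$-th power with $k = [n^{2}(\log n + \log\log n + 4)]$, and applying H\"older's inequality followed by orthogonality, one obtains a bound of the form
$$
|S|^{2k} \ll P^{2k-1} Y^{-2k} J_{k} N(\alpha, q, P),
$$
where $J_{k}$ is the Vinogradov integral governed by Lemma~\ref{Vin} and $N(\alpha, q, P)$ counts integer tuples $(m_{1}, \ldots, m_{n})$ with $|m_{j}| \leq P^{j}$ encoding how closely $\alpha$ can be approximated modulo $1$ by rationals of small height.

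Second, I would apply Lemma~\ref{Vin} to bound $J_{k} \ll P^{2k - n(n+1)/2 + \varepsilon}$. The count $N(\alpha, q, P)$ is then estimated using the diophantine hypothesis: for $P^{\eta} \leq q \leq P^{n - \eta}$ a standard divisor-type argument gives $N \ll q^{-1} P^{n(n+1)/2} + q P^{n(n-1)/2 - n}$, and each term in this bound loses a factor $P^{\eta}$ against the trivial count. Combining these estimates yields $|S|^{2k} \ll P^{2k - c}$ with $c$ proportional to $\eta/(n^{2}\log n)$, from which the lemma follows after extracting the $2k$-th root.

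The main obstacle is the careful coordination of parameters: the window length $Y$, the order of Weyl differencing, the value of $k$ used in Lemma~\ref{Vin}, and the exponent entering the diophantine count must be balanced so that the final exponent is exactly $1 - c_{1}(n^{2}\log n)^{-1}$. This balancing is the technical heart of Vinogradov's monograph \cite{Vinogradov}, from which the resulting Weyl sum estimate can in fact be cited directly; what remains is to verify that the hypothesis $P^{\eta} \leq q \leq P^{n - \eta}$ is precisely the range in which Vinogradov's bound is non-trivial, and that the constant $c_{1}$ depends only on $\eta$ and $n$ (in particular not on the lower coefficients of $g$, which is crucial for the application to the main theorem).
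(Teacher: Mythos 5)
Your overall route is the same as the paper's. The paper proves this lemma simply by observing that it is Theorem~2 of \cite{AAK}, p.~190, with the range $P^{1/4}\leqslant q\leqslant P^{n-1/4}$ relaxed to $P^{\eta}\leqslant q\leqslant P^{n-\eta}$, the only change in the proof being the choice of the shifting parameter $\tau=[C(\eta)\,n\log n]$ with $C(\eta)>0$ large; your sketch of Vinogradov's method (translation averaging over shifts, H\"older, a mean value theorem, and a diophantine count of approximations to $\alpha$) is an outline of precisely that cited proof, and your closing remark that the estimate can be cited directly, with attention to how $c_{1}$ depends on $\eta$, is exactly what the paper does.

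One concrete misstep should be flagged: you propose to bound the Vinogradov integral by ``applying Lemma~\ref{Vin}'' to get $J_{k}\ll P^{2k-n(n+1)/2+\varepsilon}$. Lemma~\ref{Vin} of this paper is not that statement: it bounds the one-dimensional mean value $\int_{0}^{1}|\sum_{x\leqslant P}e^{2\pi i\alpha x^{n}}|^{k}\,d\alpha\ll P^{k-n}$, i.e.\ the number of solutions of the single equation $\sum x_{i}^{n}=\sum y_{i}^{n}$. What your argument actually requires is the full $n$-dimensional Vinogradov mean value
$$
J_{k,n}(P)=\int_{[0,1]^{n}}\Bigl|\sum_{x\leqslant P}e^{2\pi i(\alpha_{1}x+\cdots+\alpha_{n}x^{n})}\Bigr|^{2k}\,d\alpha_{1}\cdots d\alpha_{n}\ll P^{2k-n(n+1)/2+\varepsilon},
$$
which controls the system $\sum x_{i}^{j}=\sum y_{i}^{j}$, $1\leqslant j\leqslant n$, and is not among the paper's lemmas; it must be imported from \cite{Vinogradov} or \cite{AAK}. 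Since you state the correct exponent and ultimately defer to the literature, this is a misattribution rather than a fatal gap, but the step ``apply Lemma~\ref{Vin}'' would not produce the bound you need as written. The one point the paper makes explicit that your ``balancing of parameters'' paragraph only gestures at is that the entire dependence on $\eta$ enters through a single parameter, the number of shifting steps $\tau$.
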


\begin{proof}
Lemma 4 is different from Theorem 2 of \cite{AAK}, p. 190 that the condition $P^{1 / 4}\leqslant q \leqslant P^{n-1 / 4}$ replaced by $P^{\eta}\leqslant q \leqslant P^{n-\eta}$. Proof of Lemma 4 and the theorem is essentially the same, only the parameter $\tau$ must be chosen so: $\tau = [C (\eta) n \log n]$, where $C(\eta)> 0$ -- just a large number.
\end{proof}

\begin{lemma}\label{Veyl} (Weyl Inequality)
Let $g(x)$ be a polynomial of degree $n$, $2\leqslant n \leqslant  10$, with leading coefficient $ \alpha$, $\alpha = \frac{a}{q} + \frac{a}{q^{2}} $, $ (a, \, q) = 1 $, $ |\theta| \leqslant 1 $.

Then
$$
\big|\sum\limits_{x \leqslant P} e^{2 \pi ig (x)} \big| \ll P^{1 + \varepsilon} \big(q^{-1} + P^{ -1} + qP^{-n} \big)^{2^{-n +1}}.
$$
\end {lemma}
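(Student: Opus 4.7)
The plan is to establish this by the classical Weyl differencing method, reducing the sum over the polynomial $g$ of degree $n$ to a bounded number of sums involving a linear phase.

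First, I would bound $|S|^2$, where $S = \sum_{x \leq P} e^{2\pi i g(x)}$, by expanding the square and substituting $y = x + h_1$, producing
$$
|S|^2 \leq P + 2 \sum_{1 \leq h_1 \leq P} \left| \sum_{x \in I_1(h_1)} e^{2\pi i g_1(x; h_1)} \right|,
$$
where $g_1(x; h_1) = g(x+h_1) - g(x)$ is a polynomial in $x$ of degree $n-1$ with leading coefficient $n \alpha h_1$. Applying Cauchy–Schwarz and repeating this differencing step a total of $n-1$ times, the degree drops by one each step, and after the full iteration the innermost sum is a linear exponential sum with phase $n!\, \alpha h_1 h_2 \cdots h_{n-1} x$. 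This gives, up to lower-order terms,
$$
|S|^{2^{n-1}} \ll P^{2^{n-1}-n} \sum_{|h_1|, \ldots, |h_{n-1}| \leq P} \left| \sum_{x \in I} e^{2\pi i n!\, \alpha h_1 h_2 \cdots h_{n-1} x} \right|,
$$
where $I = I(h_1, \ldots, h_{n-1})$ is a subinterval of $[1,P]$.

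The linear inner sum is a geometric progression and is therefore bounded by $\min(P, \tfrac{1}{2}\|n! \alpha h_1 \cdots h_{n-1}\|^{-1})$. Setting $m = n!\, h_1 \cdots h_{n-1}$ and using the divisor bound $\tau_{n-1}(m) \ll_\varepsilon m^\varepsilon$, the multiple sum is majorized by
$$
P^\varepsilon \sum_{m \leq n! P^{n-1}} \min\!\bigl(P, \|m\alpha\|^{-1}\bigr).
$$

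The main technical step, and the real obstacle, is the classical large-sieve-type estimate for this last sum using the rational approximation $\alpha = a/q + \theta/q^2$ with $(a,q)=1$. Splitting the range of $m$ into blocks of length $q$ and observing that within each such block the fractional parts $\{m\alpha\}$ are spaced essentially like $1/q$, one obtains
$$
\sum_{m \leq M} \min\!\bigl(P, \|m\alpha\|^{-1}\bigr) \ll \left(\frac{M}{q} + 1\right)(P + q\log q).
$$
Applying this with $M = n! P^{n-1}$ and inserting into the inequality for $|S|^{2^{n-1}}$ yields
$$
|S|^{2^{n-1}} \ll P^{2^{n-1}+\varepsilon}\left(P^{-1} + q^{-1} + qP^{-n}\right),
$$
from which extracting the $2^{-n+1}$-th root gives the stated Weyl bound. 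The delicate point is verifying that the error terms accumulated in each of the $n-1$ differencing steps (coming from the dependence of the ranges $I_j$ on the previous $h$'s) remain absorbed into the factor $P^{2^{n-1}-n}$; this is standard bookkeeping but is the only place where care is needed.
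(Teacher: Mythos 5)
The paper gives no proof of this lemma, referring instead to Vaughan's book, and your argument is exactly the standard Weyl differencing proof found there: iterate the squaring $n-1$ times to reach a linear phase $n!\,\alpha h_{1}\cdots h_{n-1}x$, bound the geometric sum by $\min\bigl(P,\|m\alpha\|^{-1}\bigr)$, collect $m=n!\,h_{1}\cdots h_{n-1}$ via the divisor bound, and finish with the standard estimate $\sum_{m\leqslant M}\min\bigl(P,\|m\alpha\|^{-1}\bigr)\ll (M/q+1)(P+q\log q)$. The outline is correct, including the exponent bookkeeping, so there is nothing to add.
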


For the proof see \cite{Von}, p. 19.

\begin{center}
{\bf {3. Proof of Theorem}}
\end {center}
\bigskip

\textbf{3.1} We first prove that the sum
$$
W(\alpha)=\sum\limits_{x \leqslant P}\varepsilon(x) e^{2\pi i \alpha x^{n}}
$$
for any $\alpha\in\mathbb{R}$ satisfies the estimate
$$
|W(\alpha)|\ll_{n}P^{1-c_{1} (n^{2}\log n)^{-1}},
$$
where $c_{1}> 0$ is a constant.

Approximate $\alpha$ with a rational number:
$\alpha =\frac{a}{q}+\frac{\theta}{q \tau}$, $(a, \, q) = 1$, $|\theta| \leqslant 1$, $1 \leqslant q \leqslant\tau=P^{n-1-1/2000}$.

Cases where $q \leqslant P^{0,001}$ and $P^{0,001}<q \leqslant \tau $, are considered in different ways.

Assume first that $P^{0,001} <q \leqslant \tau$. Define integers $H$ and $K$:
$$
H = [P^{1/4000n}],{~~~~~~} 2^{K-1} <P^{1/2000n} \leqslant 2^{K}.
$$
We apply the well-known inequality, whose proof is given, for example, \cite{VoroninKaratsuba}, p. 361:
$$
|W(\alpha)|^{2}\ll\frac{P^{2}}{H}+\frac{P}{H}\sum\limits_{1\leqslant h<H}\big|\sum\limits_{x\leqslant P-h}\varepsilon(x)\varepsilon(x+h)e^{2\pi i \alpha ((x+h)^{n}-x^{n})}\big|\ll
$$
$$
\ll\frac{P^{2}}{H}+PH+\frac{P}{H}\sum\limits_{1\leqslant h<H}\big|\sum\limits_{x\leqslant P}\varepsilon(x)\varepsilon(x+h)e^{2\pi i \alpha ((x+h)^{n}-x^{n})}\big|.
$$

Put in the last sum ${a}/{q}$ instead of $ \alpha $ and estimate the error occurring at the same time.

We have
$$
e^{2\pi i \alpha ((x+h)^{n}-x^{n})}=e^{2\pi i \frac{a}{q}((x+h)^{n}-x^{n})}e^{2\pi i z((x+h)^{n}-x^{n})}=
$$
$$
=e^{2\pi i \frac{a}{q}((x+h)^{n}-x^{n})}\big(1+O(|z|h P^{n-1})\big)=
$$
$$
=e^{2\pi i \frac{a}{q}((x+h)^{n}-x^{n})}\big(1+O(P^{-1/4000})\big),
$$
since
$$
|z|h<\frac{H}{q\tau}\leqslant\frac{1}{q}P^{-n+1+1/2000+1/4000},{~~~}q>P^{1/1000}.
$$

Hence we have
$$
|W(\alpha)|^{2}\ll\frac{P^{2}}{H}+PH+P^{2-1/4000}+$$
$$
+\frac{P}{H}\sum\limits_{1\leqslant h<H}\big|\sum\limits_{x\leqslant P}\varepsilon(x)\varepsilon(x+h)e^{2\pi i \frac{a}{q} ((x+h)^{n}-x^{n})}\big|.
$$

We split the last sum in arithmetic progressions with the difference $2^{K}$:
$$
\sum\limits_{x\leqslant P}\varepsilon(x)\varepsilon(x+h)e^{2\pi i \frac{a}{q} ((x+h)^{n}-x^{n})}=
$$
$$
=\sum\limits_{m=0}^{2^{K}-1}\sum\limits_{1\leqslant 2^{K}y+m\leqslant P}\varepsilon(2^{K}y+m)\varepsilon(2^{K}y+m+h)e^{2\pi i \frac{a}{q} ((2^{K}y+m+h)^{n}-(2^{K}y+m)^{n})}=
$$
$$
=\sum\limits_{m=0}^{2^{K}-h-1}\sum\limits_{y\leqslant P2^{-K}}\varepsilon(2^{K}y+m)\varepsilon(2^{K}y+m+h)e^{2\pi i \frac{a}{q} ((2^{K}y+m+h)^{n}-(2^{K}y+m)^{n})}+$$
$$
+O(2^{K})+O(PH2^{-K}).
$$

By definition of $\varepsilon(n)$ for $0\leqslant m <2^{K}+ h $, we have
$$
\varepsilon(2^{K} y+ m)\varepsilon(2^{K} y + m + h) = \varepsilon(m)\varepsilon(m + h),
$$
so
$$
\big|\sum\limits_{x\leqslant P}\varepsilon(x)\varepsilon(x+h)e^{2\pi i \frac{a}{q} ((x+h)^{n}-x^{n})}\big|\ll
$$
$$
\ll\sum\limits_{m=0}^{2^{K}-1}\big|\sum\limits_{y\leqslant P2^{-K}}e^{2\pi i \frac{a}{q} ((2^{K}y+m+h)^{n}-(2^{K}y+m)^{n})}\big|+2^{K}+PH2^{-K}.
$$

For fixed $m$ and $h$ function
$$
\frac{a}{q}\big((2^{K}y+m+h)^{n}-(2^{K}y+m)^{n}\big)
$$
is a polynomial of $y$ of degree $n-1$ with leading coefficient $\frac{a}{q}nh2^{K(n-1)}$.
If this rational fraction is cancellable, then perform the reduction and
$$
\frac{a}{q}nh2^{K(n-1)}=\frac{a_{1}}{q_{1}},{~~~}\text{где}{~~~}(a_{1},\,q_{1})=1.
$$

Since $(a,\,q)=1$, $q_{1}\gg q2^{-Kn}>P^{1/2000}$, because $q>P^{0,001}$, $2^{Kn}\geqslant P^{1/2000}$.

Thus,
\begin{equation}\label{teo2}
    P^{1/2000}<q_{1}\leqslant\tau=P^{n-1-1/2000}.
\end{equation}

Thus, it suffices to estimate Weyl's sum  of a special type
$$
\sum\limits_{y\leqslant P2^{-K}}e^{2\pi i g(y)},
$$
where $g(y)$ is a polynomial of degree $n-1$ and leading coefficient $a_{1}/q_{1} $, where $(a_{1}, \, q_{1}) = 1$, and $ q_{1}$ satisfies $(\ref {teo2})$.

Estimating the sum at $n\leqslant 11 $ by Lemma 5, and when $n> 11$ -- by Lemma 4, we obtain:
$$
\big|\sum\limits_{x\leqslant P}\varepsilon(x)\varepsilon(x+h)e^{2\pi i \alpha ((x+h)^{n}-x^{n})}\big|\ll P^{1-2c_{1}(n^{2}\log n)^{-1}},{~~~}(c_{1}>0),
$$
and so
$$
|W(\alpha)|\ll P^{1-c_{1}(n^{2}\log n)^{-1}}
$$
in the case of $P^{0,001} <q \leqslant P^{n-1-1/2000}$.

Now let $1\leqslant q \leqslant P^{0,001}$. Transform $ W(\alpha)$:
$$
W(\alpha)=\sum\limits_{x\leqslant P}\varepsilon(x)e^{2 \pi i \frac{a}{q}x^{n}}e^{2\pi i z x^{n}}= \sum\limits_{l=0}^{q-1}e^{2 \pi i \frac{al^{n}}{q}} \sum_{\substack {x\leqslant P, \\ x\equiv l(\bmod q)}}\varepsilon(x)e^{2 \pi i z x^{n}}=
$$
$$
=\sum\limits_{l=0}^{q-1}e^{2 \pi i \frac{al^{n}}{q}} \sum\limits_{x\leqslant P}\varepsilon(x)e^{2 \pi i z x^{n}}\frac{1}{q}\sum\limits_{b=0}^{q-1}e^{2 \pi i \frac{b(l-x)}{q}}=
$$
$$
=\frac{1}{q}\sum\limits_{b=0}^{q-1}\Big(\sum\limits_{l=0}^{q-1}e^{2 \pi i \frac{al^{n}+bl}{q}}\Big)W(z,\,b),
$$
where
$$
W(z,\,b)=\sum\limits_{x\leqslant P}\varepsilon(x)e^{2 \pi i (zx^{n}-\frac{bx}{q})}.
$$

We use the Cauchy inequality:
$$
|W(z,\,b)|^{2}\leqslant \frac{1}{q}\sum\limits_{b=0}^{q-1}\big|\sum\limits_{l=0}^{q-1}e^{2 \pi i \frac{al^{n}+bl}{q}}\big|^{2}\,\,\big|W(z,\,b)\big|^{2}.
$$

Let $H_{1}=[P^{1/30}]$. Then
$$
|W(z,\,b)|^{2}\ll \frac{P^{2}}{H_{1}}+PH_{1}+\frac{P}{H_{1}}\sum\limits_{1\leqslant h< H_{1}}\big|\sum\limits_{x\leqslant P}\varepsilon(x)\varepsilon(x+h)e^{2 \pi i z ((x+h)^{n}-x^{n})}\big|.
$$

We define a natural number $ K_{1} $ from the inequalities
$$
2^{K_{1}-1}<P^{1/15}\leqslant 2^{K_{1}}.
$$

We divide  $ x $ in arithmetic progression with difference of $ 2^{K_{1}} $ and obtain:
$$
\big|\sum\limits_{x\leqslant P}\varepsilon(x)\varepsilon(x+h)e^{2\pi i z ((x+h)^{n}-x^{n})}\big|\ll
$$
$$
\ll \big|\sum\limits_{m=0}^{2^{K_{1}}-h-1}\sum\limits_{y\leqslant P2^{-K_{1}}}\varepsilon(2^{K_{1}}y+m)\varepsilon(2^{K_{1}}y+m+h)
e^{2\pi i z ((2^{K_{1}}y+m+h)^{n}-(2^{K_{1}}y+m)^{n})}\big|+
$$
$$
+2^{K_{1}}+PH_{1}2^{-K_{1}}.
$$

Again we use the identity
$$
\varepsilon(2^{K_{1}}y+m)\varepsilon(2^{K_{1}}y+m+h)=\varepsilon(m)\varepsilon(m+h),
$$
valid for $0\leqslant m<2^{K_{1}}-h$. We obtain
$$
\big|\sum\limits_{x\leqslant P}\varepsilon(x)\varepsilon(x+h)e^{2\pi i z ((x+h)^{n}-x^{n})}\big|\ll
$$
$$
\ll \big|\sum\limits_{m=0}^{2^{K_{1}}-1}\varepsilon(m)\varepsilon(m+h)\sum\limits_{y\leqslant P2^{-K_{1}}}e^{2\pi i z ((2^{K_{1}}y+m+h)^{n}-(2^{K_{1}}y+m)^{n})}\big|+$$
$$
+2^{K_{1}}+PH_{1}2^{-K_{1}}.
$$

Our next goal is to show that the sum over $y$ "depends only weak" on $ m $ and $ h $.

The following equalities hold:
$$
(2^{K_{1}}y+m+h)^{n}-(2^{K_{1}}y+m)^{n} =
$$
$$=h\big((2^{K_{1}}y+m+h)^{n-1}+(2^{K_{1}}y+m+h)^{n-2}(2^{K_{1}}y+m)+ \ldots +(2^{K_{1}}y+m)^{n-1}\big);
$$
$$
(2^{K_{1}}y+m+h)^{j}=(2^{K_{1}}y)^{j}+O(2^{K_{1}}P^{j-1});{~~~}(1\leqslant j\leqslant n-1)
$$
$$
(2^{K_{1}}y+m)^{j}=(2^{K_{1}}y)^{j}+O(2^{K_{1}}P^{j-1}).{~~~}(1\leqslant j\leqslant n-1)
$$

From these equations it follows that
$$
(2^{K_{1}}y+m+h)^{n}-(2^{K_{1}}y+m)^{n} =hn(y2^{K_{1}})^{n-1}+O(H_{1}2^{2K_{1}}P^{n-2}),
$$
$$
e^{2 \pi i z ((2^{K_{1}}y+m+h)^{n}-(2^{K_{1}}y+m)^{n}})=e^{2 \pi i z hn (y2^{K_{1}})^{n-1}}1+O\big(|z|H_{1}2^{2K_{1}}P^{n-2}\big).
$$

By definition, $|z|\leqslant \frac{1}{\tau}=P^{n-1+1/2000}$, $H_{1}\leqslant P^{1/30}$, $2^{K_{1}}\ll P^{1/15}$, so
$$
|z|H_{1}2^{K_{1}}P^{n-2}\ll P^{-1+1/2000+2/15+1/30}\ll P^{-1/2}.
$$

We have obtained the inequality
$$
\Big|\sum\limits_{x\leqslant P}\varepsilon(x)\varepsilon(x+h)e^{2\pi i z ((x+h)^{n}-x^{n})}\Big|\ll P2^{-K_{1}}\Big|\sum\limits_{m=0}^{2^{K_{1}}-1}\varepsilon(m)\varepsilon(m+h)\Big|+
$$
$$
+P^{1/2}+2^{K_{1}}+PH2^{-K_{1}}.
$$

Returning to the estimate $ W (\alpha, \, b) $, we obtain:
$$
|W(\alpha,\,b)|^{2}\ll\frac{P^{2}}{H_{1}}+PH_{1}+\frac{P^{2}}{H_{1}2^{K_{1}}} \sum\limits_{1\leqslant h\leqslant H_{1}}\big|\sum_{m\leqslant 2^{K_{1}}}\varepsilon(m)\varepsilon(m+h)\big|+$$
$$+\big(2^{K_{1}}+PH_{1}2^{-K_{1}}\big)P.
$$

Applying Lemma 3, we arrive at
$$
|W(\alpha,\,b)|^{2}\ll\frac{P^{2}}{H_{1}}+PH_{1}+P2^{K_{1}}+ P^{2}H_{1}2^{-K_{1}}+\frac{P^{2}}{H_{1}^{1-\mu}}\ll \frac{P^{2}}{H_{1}^{1-\mu}},
$$
where $\mu=0,901\ldots$

Finally, since
$$
\frac{1}{q}\sum\limits_{b=0}^{q-1}\Big|\sum_{l=0}^{q-1}e^{2 \pi i \frac{al^{n}+bl}{q}}\Big|^{2}=\sum_{l_{1},\,l_{2}=0}^{q-1}e^{2 \pi i \frac{a(l_{1}^{n}-l_{2}^{n})}{q}}\frac{1}{q}\sum\limits_{b=0}^{q-1} e^{2 \pi i\frac{b(l_{1}-l_{2})}{q}}=q,
$$
we obtain:
$$
|W(\alpha)|^{2}\ll\frac{P^{2}q}{H_{1}^{1-\mu}}\ll    P^{2}P^{0,002-(1-\mu)/15}\ll P^{2-0,05}.
$$

Thus, for any $\alpha\in \mathbb{R}$
$$
|W(\alpha)|\ll P^{1-\frac{c_{1}}{n^{2}\log n}},
$$
where $c_{1}$ is a constant.

\textbf{3.2} Conclusion of the proof of the theorem.

Let  $P=N^{1/n}$.

Since
$$
\frac{1+\varepsilon(x)}{2}=\left\{
  \begin{array}{ll}
    1, & \text{если} {~~~} x\in \mathbb{N}_{0}; \\
    0, & \text{если} {~~~} x\in \mathbb{N}_{1},
  \end{array}
\right.
$$
we have
$$
I_{k,\,n}(N)=\int_{0}^{1}\Big(\frac{S(\alpha)+W(\alpha)}{2}\Big)^{k}e^{-2 \pi i \alpha N}\,d \alpha,
$$
where
$$
S(\alpha)=\sum\limits_{x\leqslant P}e^{2 \pi i \alpha x^{n}}.
$$

Hence we have
$$
I_{k,\,n}(N)=2^{-k}J_{k,\,n}(N)+O(R),
$$
where

$$
R=\sum_{l=1}^{k}\binom{k}{l}R_{l},{~~~}R_{l}=\int_{0}^{1}|W(\alpha)|^{l}|S(\alpha)|^{k-l}\,d\alpha.
$$

Note that for positive $ k_{1} $ and $ l_{1} $
\begin{equation}\label{teo3}
    \int_{0}^{1}|W(\alpha)|^{2l_{1}}|S(\alpha)|^{2k_{1}}\,d\alpha\leqslant \int_{0}^{1}|S(\alpha)|^{2(k_{1}+l_{1})}\,d\alpha.
\end{equation}

Indeed,
$$
\int_{0}^{1}|W(\alpha)|^{2l_{1}}|S(\alpha)|^{2k_{1}}\,d\alpha=
$$
$$=\sum_{x_{1}\leqslant P}\varepsilon(x_{1})\ldots\sum_{x_{l_{1}}\leqslant P}\varepsilon(x_{l_{1}}) \sum_{y_{1}\leqslant P}\varepsilon(y_{1})\ldots\sum_{y_{l_{1}}\leqslant P}\varepsilon(y_{l_{1}})\times
$$
$$
\times\int_{0}^{1}|S(\alpha)|^{2k_{1}}e^{2 \pi i \alpha (x_{1}^{n}+\ldots+x_{l_{1}}^{n}-y_{1}^{n}-\ldots-y_{l_{1}}^{n})}\,d\alpha\leqslant
$$
$$
\leqslant \sum_{x_{1}\leqslant P}\ldots\sum_{x_{l_{1}}\leqslant P}\sum_{y_{1}\leqslant P}\ldots\sum_{y_{l_{1}}\leqslant P}\Big|\int_{0}^{1}|S(\alpha)|^{2k_{1}}e^{2\pi i \alpha (x_{1}^{n}+\ldots+x_{l_{1}}^{n}-y_{1}^{n}-\ldots-y_{l_{1}}^{n})}\,d\alpha\Big|.
$$

But
$$
\int_{0}^{1}|S(\alpha)|^{2k_{1}}e^{2\pi i \alpha (x_{1}^{n}+\ldots+x_{l_{1}}^{n}-y_{1}^{n}-\ldots-y_{l_{1}}^{n})}\,d\alpha\geqslant 0,
$$
so
$$
\leqslant \sum_{x_{1}\leqslant P}\ldots\sum_{x_{l_{1}}\leqslant P}\sum_{y_{1}\leqslant P}\ldots\sum_{y_{l_{1}}\leqslant P}\Big|\int_{0}^{1}|S(\alpha)|^{2k_{1}}e^{2\pi i \alpha (x_{1}^{n}+\ldots+x_{l_{1}}^{n}-y_{1}^{n}-\ldots-y_{l_{1}}^{n})}\,d\alpha\Big|=
$$
$$
=\int_{0}^{1}|S(\alpha)|^{2(k_{1}+l_{1})}\,d\alpha.
$$

Using the inequality (\ref{teo3}), we estimate $ R_{l} $ for $ 1 \leqslant l \leqslant k $. We consider separately the four possible cases.

$ 1^{\circ} $ $ l $ is odd, $ (k-l) $ -- an even number. Then
$$
R_{l}\leqslant |W(\alpha_{0})|\int_{0}^{1}|S(\alpha)|^{k-1}\,d\alpha,
$$
where
$$
|W(\alpha_{0})|=\max\limits_{0\leqslant \alpha\leqslant 1}|W(\alpha)|.
$$

$ 2^{\circ} $ $ l $ is odd, $ (k-l) $ is odd. Then
$$
R_{l}\leqslant |W(\alpha_{0})|P\int_{0}^{1}|S(\alpha)|^{k-2}\,d\alpha.
$$

$ 3^{\circ} $ $ l $ is an even number, $ (k-l) $ is an even number. Then
$$
R_{l}\leqslant |W(\alpha_{0})|^{2}\int_{0}^{1}|S(\alpha)|^{k-2}\,d\alpha.
$$

$ 4^{\circ} $ $ l $ is an even number, $ (k-l) $ is odd. Then
$$
R_{l}\leqslant |W(\alpha_{0})|^{2}P\int_{0}^{1}|S(\alpha)|^{k-3}\,d\alpha.
$$

Hence, for any $l\in [1,\,k]$, we have
$$
R_{l}\leqslant |W(\alpha_{0})|P^{2}\int_{0}^{1}|S(\alpha)|^{k-3}\,d\alpha.
$$

By hypothesis, $k\geqslant k_{0}+3$. Applying for $3\leqslant n \leqslant10$ Lemma ~ 1, and $n>10$ -- Lemma 2, we obtain:
$$
R_{l}\ll |W(\alpha_{0})|P^{k-1-n+\varepsilon},
$$
where $\varepsilon >0$.

Since
$$
|W(\alpha_{0})|\ll P^{1-c_{1}(n^{2}\log n)^{-1}},
$$
choosing
$$
\varepsilon =\frac{c_{1}}{2n^{2}\log n},
$$
we get
$$
R\ll P^{k-n-\Delta},
$$
where
$$
\Delta=\frac{c_{1}}{2n^{2}\log n}.
$$
The theorem is proved.

\pagebreak

\end{document}